
\documentclass{amsproc}
%%%%%%%%%%%%%%%%%%%%%%%%%%%%%%%%%%%%%%%%%%%%%%%%%%%%%%%%%%%%%%%%%%%%%%%%%%%%%%%%%%%%%%%%%%%%%%%%%%%%%%%%%%%%%%%%%%%%%%%%%%%%%%%%%%%%%%%%%%%%%%%%%%%%%%%%%%%%%%%%%%%%%%%%%%%%%%%%%%%%%%%%%%%%%%%%%%%%%%%%%%%%%%%%%%%%%%%%%%%%%%%%%%%%%%%%%%%%%%%%%%%%%%%%%%%%
\usepackage{eurosym}
\usepackage{amssymb}
\usepackage{amsfonts}

\setcounter{MaxMatrixCols}{10}
%TCIDATA{OutputFilter=LATEX.DLL}
%TCIDATA{Version=5.00.0.2552}
%TCIDATA{<META NAME="SaveForMode" CONTENT="3">}
%TCIDATA{Created=Sunday, October 14, 2007 18:09:13}
%TCIDATA{LastRevised=Sunday, June 24, 2012 21:19:33}
%TCIDATA{<META NAME="GraphicsSave" CONTENT="32">}
%TCIDATA{<META NAME="DocumentShell" CONTENT="Articles\SW\AMS Proceedings Article">}
%TCIDATA{Language=American English}
%TCIDATA{CSTFile=amsprtci.cst}

\theoremstyle{plain}

\newtheorem{theorem}{Theorem}
\numberwithin{equation}{section}

\begin{document}
\title{Indicated coloring of matroids}
\author{Micha\l\ Laso\'{n}}
\address{Institute of Mathematics of the Polish Academy of Sciences, \'{S}niadeckich 8, 00-956 Warszawa, Poland}
\address{Theoretical Computer Science Department, Faculty of Mathematics and
Computer Science, Jagiellonian University, \L ojasiewicza 6, 30-348 Krak\'{o}w, Poland}
\email{michalason@gmail.com}
\thanks{This publication is supported by the Polish National Science Centre grant no. 2011/03/N/ST1/02918.}
\keywords{Matroid, game coloring, indicated coloring, chromatic number}

\begin{abstract}
A coloring of a matroid is \emph{proper} if elements of the same color form
an independent set. For a loopless matroid $M$, its chromatic number $\chi (M)$
is the minimum number of colors that suffices to color properly the ground set $E$
of $M$. In this note we study a game-theoretic variant of this parameter proposed by Grytczuk. 
Suppose that in each round of the game Alice indicates an uncolored yet element $e$ of $E$, then Bob colors it using a color from a fixed set of colors $C$. The rule Bob has to obey is that it is a proper coloring. The game ends if the whole matroid has been colored or if Bob can not color $e$ using any color of $C$. Alice wins in the first case, while Bob in the second. The minimum size of the set of colors $C$ for which Alice has a winning strategy is called the \emph{indicated chromatic number} of $M$, denoted by $\chi_{i}(M)$. We prove that $\chi_i(M)=\chi(M)$.
\end{abstract}

\maketitle

%%%%%%%%%%%%%%%%%%%%%%%%%%%%%%%%%%%%%%%%%%%%%%%%%%%%%%%%%%%%%%%%%%%%%%%%%%%%%%%%%%%%%%%%%%%%%%%%%%%%%%%%%%%%%%%%%%%%%%%%%%%%%%%%%%%%%%%%%%%%%%%%%%%%%%%%%%%%%%%%%%%%%%%%%
\section{Introduction}
%%%%%%%%%%%%%%%%%%%%%%%%%%%%%%%%%%%%%%%%%%%%%%%%%%%%%%%%%%%%%%%%%%%%%%%%%%%%%%%%%%%%%%%%%%%%%%%%%%%%%%%%%%%%%%%%%%%%%%%%%%%%%%%%%%%%%%%%%%%%%%%%%%%%%%%%%%%%%%%%%%%%%%%%%%

Let $M$ be a loopless matroid on a ground set $E$ (the reader is referred to \cite{Ox92} for background of matroid theory). In analogy to the graph case we say that a coloring of the set $E$ is \emph{proper} if elements of the same color form an independent set of $M$. The \emph{chromatic number} of $M$, denoted by $\chi (M)$, is the minimum number of colors that suffice to color properly the set $E$. In case of a graphic matroid $M=M(G)$, the number $\chi (M)$ is a well studied
parameter known as the \emph{arboricity} of the underlying graph $G$.

The study of game-theoretic variants of chromatic number was initiated for graphs independently by Brams, cf. \cite{Ga81}, and Bodlaender \cite{Bo91}. They defined game chromatic number of a graph, which was intensively studied (see \cite{AlAl89,BaGr08,DiZh99,Ki00,Zh99}). A natural question concerning all game-theoretic variants of chromatic number is whether it is bounded from above by a function of chromatic number, and if yes then what is the best possible bound. The game chromatic number of a graph is not bounded, as it can be arbitrary large for bipartite graphs. A matroidal version---the game chromatic number $\chi_{g}(M)$ of a matroid was studied in \cite{La12}, where the author shows that $\chi_{g}(M)\leq 2\chi (M)$ for every matroid $M$. This gives a nearly tight bound, since for every $k\geq 3$ there are matroids with $\chi (M)=k$ and $\chi_{g}(M)\geq 2k-1$. 

Another game-theoretic variant of list chromatic number was introduced by Schauz \cite{Sc09} (see also \cite{Zh09}), it is called on-line list chromatic number. For graphs it is bounded by an exponential function of chromatic number, and there are known examples for which both parameters differ. For matroids they are always equal \cite{LaLu12}, and also equal to the chromatic number \cite{Se98}.

The newest variant of the graph coloring game was proposed by Grytczuk. Let $G$ be a graph, and let $C$ be a fixed set of colors. In each round of the game Alice indicates an uncolored yet vertex, then Bob colors it using a color from $C$. The only rule Bob has to obey is that it is a proper coloring. The goal of Alice is to achieve a proper coloring of the whole graph, while Bob is trying to prevent it (arrive at a partial coloring that can not be extended). The minimum size of the set of colors $C$ for which Alice has a winning strategy is called the \emph{indicated chromatic number} of a graph $G$, denoted by $\chi_{i}(G)$. Clearly $\chi_i(G)\geq\chi(G)$.
Grzesik \cite{Gr12} proved that if $\chi(G)=2$, then $\chi_i(G)=2$ and gave an example of a graph $G$ with $\chi(G)=3$ and $\chi_i(G)=4$. He also shows an upper bound $\chi_i(G)\leq 4\chi(G)$ for random graphs, and conjectures that indicated chromatic number of a graph is bounded by a function of chromatic number.

In this paper we study a matroidal version of the indicated chromatic number. Our main result reads as follows.

\begin{theorem}\label{main}
Every loopless matroid $M$ satisfies $\chi_i(M)=\chi(M)$.
\end{theorem}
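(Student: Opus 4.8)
The inequality $\chi_i(M)\ge\chi(M)$ is immediate: with fewer than $\chi(M)$ colours $M$ has no proper colouring at all, so Bob wins regardless of play. For the reverse inequality put $k=\chi(M)$, and my plan is to let Alice play so as to preserve the invariant that the current partial proper colouring $c\colon S\to[k]$ is \emph{extendable}, i.e.\ extends to a proper $k$-colouring of all of $E$. If she can always do this the game never ends in Bob's favour — whatever element $e$ she indicates, the colour $c^*(e)$ of an extension $c^*$ is a legal move for Bob — and since the game is finite she wins. Thus the whole problem reduces to one step: \emph{from an extendable position with some element uncoloured, Alice can indicate an uncoloured $e$ such that every legal colouring of $e$ leaves the position extendable.} I would prove this by induction on the number of uncoloured elements.

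To analyse the step, write $A_i=c^{-1}(i)$, $U=E\setminus S$, and $M_i=(M/A_i)|_U$. By the matroid union theorem, $c$ is extendable iff $U$ partitions into sets independent in $M_1,\dots,M_k$ respectively, equivalently iff $\sum_i r_{M_i}(F)\ge|F|$ for all $F\subseteq U$; and colour $j$ is a legal move for $e\in U$ exactly when $r_{M_j}(e)=1$. Call $F\subseteq U$ \emph{tight} if $\sum_i r_{M_i}(F)=|F|$; since $F\mapsto\sum_i r_{M_i}(F)-|F|$ is submodular and nonnegative on $U$ with value $0$ at $\emptyset$, the tight sets are closed under union and intersection. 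A short computation with the rank formula for the matroid union $(M_j/e)\vee\bigvee_{i\ne j}M_i$ on $U\setminus e$ shows that, for a legal colour $j$, colouring $e$ by $j$ destroys extendability if and only if there is a tight set $Y\subseteq U\setminus e$ with $e\in\overline{Y}^{M_j}$.

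The choice of $e$ rests on two facts. First, if $Y,Z\subseteq U$ are disjoint tight sets then $r_{M_i}(Y\cup Z)=r_{M_i}(Y)+r_{M_i}(Z)$ for every $i$: fixing a partition $(X_i)$ witnessing extendability, tightness of $Y$, $Z$ and $Y\cup Z$ forces $X_i\cap W$ to be a basis of $M_i|W$ for each $W\in\{Y,Z,Y\cup Z\}$, and the identity drops out by counting. Second, in any matroid $r(A\cup B)=r(A)+r(B)$ forces $\overline{A}\cap\overline{B}$ to contain only loops. Now I would take a minimal nonempty tight set $Z$ (if there is none, every tight subset of $U\setminus e$ is empty, so no legal colour of any $e$ can be bad and we are done) and let $e$ be \emph{any} element of $Z$. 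If some legal colour $j$ for $e$ were bad there would be a tight $Y\subseteq U\setminus e$ with $e\in\overline{Y}^{M_j}$; since $Y\cap Z$ is tight and properly inside $Z$, minimality gives $Y\cap Z=\emptyset$, so by the first fact $Y$ and $Z$ are skew in $M_j$, and then $e\in\overline{Y}^{M_j}\cap\overline{Z}^{M_j}$ would be a loop of $M_j$, contradicting $r_{M_j}(e)=1$. Hence every element of $Z$ is a safe choice for Alice.

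Putting it together: at each of her turns Alice indicates such an $e$; Bob must use a legal colour, which by the above keeps the position extendable while reducing the number of uncoloured elements by one, and the induction closes, yielding $\chi_i(M)\le k=\chi(M)$. The one genuinely delicate point is this choice of $e$. It is \emph{not} enough to pick an element for which merely \emph{some} legal colour preserves extendability — Bob would simply use a different one — and an arbitrary element need not work either; Alice must descend to the "densest", i.e.\ a minimal tight, part of the uncoloured matroid, and the skew-ness of disjoint tight sets is what makes every element there safe. The remaining ingredients — the matroid-union reformulations of extendability and of a legal move, and the lattice/closure properties of tight sets — are routine.
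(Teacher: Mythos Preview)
Your proof is correct and shares its core with the paper's: both reduce to the matroid union theorem and exploit the lattice of \emph{tight} sets (those $A$ with $\sum_i r_{M_i}(A)=|A|$). The organization differs. The paper works in the generalized game (arbitrary matroids $M_1,\dots,M_k$ on $E$) and argues by induction on $|E|$ with a two-case split: if some proper nonempty $A\subsetneq E$ is tight, Alice first wins recursively on $A$ with the restrictions $M_i|_A$ and then on $E\setminus A$ with the contractions $M_i/A$ (the tightness of $A$ is exactly what makes the second subgame satisfy the rank hypothesis); if no proper tight set exists, any element is safe because contracting it lowers at most one $r_j$ by one. You instead maintain the extendability invariant and, at each step, indicate an element of a \emph{minimal} nonempty tight set, using your skewness lemma (disjoint tight sets satisfy $r_{M_i}(Y\cup Z)=r_{M_i}(Y)+r_{M_i}(Z)$ for every $i$) to rule out every bad colour. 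The paper's recursion avoids this skewness lemma at the price of checking that the contracted game on $E\setminus A$ is again well-posed; your approach is more explicit about Alice's move at each instant but needs the extra lemma. Unwinding the paper's recursion, one sees that its first indicated element also lies in a minimal nonempty tight set, so the two strategies are in fact very close.
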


The proof is by induction used to a suitable generalization of the game. We end the paper with a fancy modification of indicated chromatic number also made by Grytczuk.

%%%%%%%%%%%%%%%%%%%%%%%%%%%%%%%%%%%%%%%%%%%%%%%%%%%%%%%%%%%%%%%%%%%%%%%%%%%%%%%%%%%%%%%%%%%%%%%%%%%%%%%%%%%%%%%%%%%%%%%%%%%%%%%%%%%%%%%%%%%%%%%%%%%%%%%%%%%%%%%%%%%%%%%%%
\section{Indicated chromatic number}
%%%%%%%%%%%%%%%%%%%%%%%%%%%%%%%%%%%%%%%%%%%%%%%%%%%%%%%%%%%%%%%%%%%%%%%%%%%%%%%%%%%%%%%%%%%%%%%%%%%%%%%%%%%%%%%%%%%%%%%%%%%%%%%%%%%%%%%%%%%%%%%%%%%%%%%%%%%%%%%%%%%%%%%%%%

The main tool we use is the matroid union theorem (for a proof see \cite{Ox92}).

\begin{theorem}\emph{(Matroid Union Theorem)}\label{union}
Let $M_1,\dots,M_k$ be matroids on the same ground set $E$, with rank functions $r_1,\dots,r_k$ respectively. The following conditions are equivalent:
\begin{enumerate}
\item there exist sets $V_i$ with $V_1\cup\dots\cup V_k=E$, such that for each $i$ the set $V_i$ is independent in $M_i$,
\item for each $A\subset E$ holds $r_1(A)+\dots+r_k(A)\geq\lvert A\rvert$.
\end{enumerate}
\end{theorem}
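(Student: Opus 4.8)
The plan is to deduce both directions from the standard \emph{union matroid} construction and its rank formula. The implication $(1)\Rightarrow(2)$ is the routine counting direction. Suppose $E=V_1\cup\dots\cup V_k$ with each $V_i$ independent in $M_i$. For any $A\subseteq E$ we then have $A=\bigcup_{i}(V_i\cap A)$, and since each $V_i\cap A$ is independent in $M_i$ its cardinality equals $r_i(V_i\cap A)$, which is at most $r_i(A)$ by monotonicity of rank. Hence
$$\lvert A\rvert\le\sum_{i=1}^{k}\lvert V_i\cap A\rvert=\sum_{i=1}^{k}r_i(V_i\cap A)\le\sum_{i=1}^{k}r_i(A),$$
which is exactly $(2)$. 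All the content of the theorem is in the converse.

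For $(2)\Rightarrow(1)$ I would introduce the matroid $M=M_1\vee\dots\vee M_k$ on the ground set $E$, declaring a set $I\subseteq E$ independent precisely when it admits a partition $I=I_1\sqcup\dots\sqcup I_k$ with $I_i$ independent in $M_i$; equivalently, when $I$ is a union of sets independent in the respective $M_i$. The key structural fact is that $M$ is indeed a matroid, with rank function
$$r_M(U)=\min_{A\subseteq U}\Big(\lvert U\setminus A\rvert+\sum_{i=1}^{k}r_i(A)\Big).$$
Granting this formula, the theorem drops out: condition $(1)$ says exactly that $E$ can be covered by independent sets, i.e. that $E$ is independent in $M$, i.e. that $r_M(E)=\lvert E\rvert$. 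Taking $A=\emptyset$ shows the minimum is always at most $\lvert E\rvert$, so $r_M(E)=\lvert E\rvert$ holds if and only if $\lvert E\setminus A\rvert+\sum_i r_i(A)\ge\lvert E\rvert$ for every $A\subseteq E$; rearranging gives $\sum_i r_i(A)\ge\lvert A\rvert$, which is $(2)$.

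It remains to prove the rank formula, and its achievability half is the main obstacle. The bound $r_M(U)\le\min_{A}(\cdots)$ is once more pure counting: any $M$-independent $I\subseteq U$ splits as $\sqcup_i I_i$, so for each $A\subseteq U$ one has $\lvert I\rvert=\lvert I\cap A\rvert+\lvert I\setminus A\rvert\le\sum_i\lvert I_i\cap A\rvert+\lvert U\setminus A\rvert\le\sum_i r_i(A)+\lvert U\setminus A\rvert$. For the reverse inequality I would take a maximum-size partition-independent set $I=I_1\sqcup\dots\sqcup I_k$ contained in $U$ and analyze it through an exchange digraph $D$ on $U$: from each $x\in U\setminus I$, which by maximality fits into no part directly, draw arcs $x\to y$ to every $y$ lying in the circuit that adding $x$ creates in the relevant $M_i$, so that the swap $I_i-y+x$ preserves independence of that part. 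Let $A$ be the set of vertices reachable in $D$ from $U\setminus I$; since $U\setminus I\subseteq A$ we get $U\setminus A\subseteq I$ for free. The crucial point is that for each $i$ the independent set $I_i\cap A$ spans $A$ in $M_i$, so that $r_i(A)=\lvert I_i\cap A\rvert$; granting this, $\lvert I\rvert=\lvert U\setminus A\rvert+\sum_i\lvert I_i\cap A\rvert=\lvert U\setminus A\rvert+\sum_i r_i(A)$, so this particular $A$ attains the minimum. Proving that $I_i\cap A$ is $M_i$-spanning in $A$---equivalently, that no exchange along $D$ can reach a fresh element outside $A$ without yielding an augmenting sequence---is where the maximality of $I$ must be fully exploited, and is the step I expect to demand the most care.
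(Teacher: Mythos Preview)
The paper does not prove this statement at all: it quotes the Matroid Union Theorem as a known tool and refers the reader to Oxley's book for a proof. So there is nothing in the paper to compare your argument against; your write-up is simply supplying the background result that the paper takes for granted.

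As for the content of your sketch, the overall architecture is the standard one and is sound: $(1)\Rightarrow(2)$ by counting, and $(2)\Rightarrow(1)$ by showing that the union matroid $M_1\vee\dots\vee M_k$ has rank function
\[
r_M(U)=\min_{A\subseteq U}\Bigl(\lvert U\setminus A\rvert+\sum_{i}r_i(A)\Bigr),
\]
from which the equivalence is immediate. The one place where your sketch is not merely incomplete but actually mis-specified is the exchange digraph. You only draw arcs out of vertices $x\in U\setminus I$, so your set $A$ of vertices reachable from $U\setminus I$ has depth one: no element of $I$ has any outgoing arc. With that definition you cannot conclude that $I_i\cap A$ spans $A$ in $M_i$; for $z\in A\cap I_j$ with $j\neq i$ there is no mechanism forcing $z$ into $\mathrm{cl}_{M_i}(I_i\cap A)$. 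The correct construction puts an arc $x\to y$ whenever $y\in I_i$, $x\notin I_i$, and $y$ lies in the fundamental $M_i$-circuit of $I_i+x$, for \emph{every} $x\in U$ and every $i$. Then $A$ is closed under out-neighbours, and the remaining case (some $z\in A\setminus I_i$ with $I_i+z$ independent) is ruled out by a shortest-augmenting-path argument exploiting maximality of $I$. You correctly flag this last step as the delicate one; just be aware that the digraph itself needs to be enlarged before that step can even be formulated.
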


For seek of completeness we repeat the definition of the game in a matroid setting. Let $M$ be a matroid on a ground set $E$. In each round of the game Alice indicates an uncolored yet element of $E$, then Bob colors it using a color from a fixed set of colors $C$. The rule Bob has to obey is that it is a proper coloring. The game ends if the whole matroid has been colored or if Bob can not color $e$ using any color of $C$. Alice wins in the first case, while Bob in the second. The \emph{indicated chromatic number} of a matroid $M$, denoted by $\chi_i(M)$, is the minimum size of the set of colors $C$ for which Alice has a winning strategy. Clearly $\chi_i(M)\geq\chi(M)$.

Theorem \ref{main} is a corollary of a slightly more general Theorem \ref{general} which concerns a game explained below. 

Let $M_1,\dots,M_k$ be a collection of matroids on the same ground set $E$. We consider a modification of the game in which the set of colors is $\{1,\dots,k\}$. The other difference concerns the rule Bob has to obey. Namely elements of color $i$ must form an independent set in a matroid $M_i$. As usual, Alice wins if the whole set $E$ has been colored, while Bob wins if a partial coloring can not be properly extended.

\begin{theorem}\label{general}
Let $M_1,\dots,M_k$ be matroids on the same ground set $E$. Suppose there are sets $V_1,\dots,V_k$, such that $V_i$ is independent in $M_i$, and $V_1\cup\dots\cup V_k=E$. Then Alice has a winning strategy in the generalized indicated coloring game.
\end{theorem}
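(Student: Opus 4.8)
The plan is to derive the theorem from the Matroid Union Theorem together with an induction on $\lvert E\rvert$. By Theorem~\ref{union} the hypothesis is equivalent to the rank condition $\sum_{i=1}^{k}r_i(A)\geq\lvert A\rvert$ for every $A\subseteq E$, so it suffices to prove: if a family $M_1,\dots,M_k$ on a nonempty ground set $E$ satisfies this rank condition, then Alice has a move $e\in E$ such that, for \emph{every} colour $j$ for which $e$ is a non-loop of $M_j$, the family consisting of $M_i\setminus e$ $(i\neq j)$ and $M_j/e$ on the ground set $E\setminus e$ again satisfies the rank condition. Granting this, after Alice indicates such an $e$ and Bob legally answers with a colour $j$ (a legal colour always exists because $\sum_i r_i(\{e\})\geq 1$, and the legal colours for a first move are exactly the non-loop ones), the rest of the play is literally the generalized indicated colouring game for the new family on $E\setminus e$; by the Matroid Union Theorem it again admits covering independent sets, and the inductive hypothesis finishes it. The base case $E=\emptyset$ is vacuous.

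To analyse the condition on $e$, I would note that for $A\subseteq E\setminus e$ and a non-loop colour $j$ the rank sum of the new family at $A$ equals $\sum_i r_i(A)+\bigl(r_j(A\cup\{e\})-r_j(A)\bigr)-1$, and the bracket is $0$ precisely when $e\in\mathrm{cl}_{M_j}(A)$ and $1$ otherwise. Hence the new rank condition can fail at $A$ only if $A$ is \emph{tight}, meaning $\sum_i r_i(A)=\lvert A\rvert$, and $e\in\mathrm{cl}_{M_j}(A)$. So it is enough to choose $e$ so that $e\notin\mathrm{cl}_{M_j}(A)$ for every non-loop colour $j$ and every tight set $A$ with $e\notin A$.

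Two observations guide the choice. First, the tight sets form a lattice: summing the submodular inequalities $r_i(A)+r_i(B)\geq r_i(A\cup B)+r_i(A\cap B)$ over $i$ and invoking the rank condition forces equality, so $A\cup B$ and $A\cap B$ are tight whenever $A$ and $B$ are. Second, if $X$ and $Y$ are disjoint and \emph{skew} in a matroid $N$, i.e.\ $r_N(X\cup Y)=r_N(X)+r_N(Y)$, then $\mathrm{cl}_N(X)\cap\mathrm{cl}_N(Y)$ consists only of loops of $N$ — again one application of submodularity. Now Alice chooses $e$ inside a \emph{minimal} nonempty tight set $A$ (and any element of $E$ if there is no nonempty tight set). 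Suppose first $\lvert A\rvert\geq 2$. Since $A\setminus e$ is a nonempty proper subset of the minimal tight set $A$ it is not tight, so $\lvert A\rvert=\lvert A\setminus e\rvert+1\leq\sum_i r_i(A\setminus e)\leq\sum_i r_i(A)=\lvert A\rvert$, forcing $r_i(A\setminus e)=r_i(A)$, i.e.\ $e\in\mathrm{cl}_{M_i}(A\setminus e)$, for every $i$. Let $B$ be any tight set with $e\notin B$. Then $A\cap B$ is tight, contained in $A$, and does not contain $e$, so $A\cap B=\emptyset$ by minimality of $A$, while $A\cup B$ is tight; summing the submodular inequalities for $A,B$ now forces $A$ and $B$ to be skew in every $M_i$, and combining this with $r_i(A\setminus e)=r_i(A)$ gives that $A\setminus e$ and $B$ are skew in every $M_j$. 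By the second observation, $e\in\mathrm{cl}_{M_j}(A\setminus e)\cap\mathrm{cl}_{M_j}(B)$ would force $e$ to be a loop of $M_j$; hence for a non-loop colour $j$ we indeed have $e\notin\mathrm{cl}_{M_j}(B)$, as required. The degenerate cases are immediate: if $A=\{e\}$ is a singleton tight set then $e$ is a non-loop of exactly one $M_{j_0}$, Bob is forced to use $j_0$, and since $e$ is then a loop of every other $M_i$ the new rank sum at any $A'\subseteq E\setminus e$ is $\sum_i r_i(A'\cup\{e\})-1\geq\lvert A'\rvert$; and if there is no nonempty tight set, any $e$ works by the same computation (every nonempty $A'$ already satisfies $\sum_i r_i(A')\geq\lvert A'\rvert+1$, and $A'=\emptyset$ is checked directly).

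The crux is precisely this choice of element together with the skew-closure observation. The naive guess — Alice indicates an element outside the \emph{maximal} tight set — is wrong, because Bob may then colour it into a matroid in which it is already spanned by that set; the elements that are safe for Alice are those lying in a \emph{minimal} nonempty tight set. Everything else (the translation through Theorem~\ref{union}, the fact that the residual position is again an instance of Theorem~\ref{general}, and that Bob never gets stuck while the rank condition is maintained) is bookkeeping.
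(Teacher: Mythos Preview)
Your proof is correct, but it follows a genuinely different route from the paper's. The paper also inducts on $\lvert E\rvert$ via Theorem~\ref{union}, but it splits into just two coarse cases. If some proper nonempty $A\subsetneq E$ is tight, Alice first plays the \emph{entire} subgame on $A$ (using the inductive hypothesis for $M_i\vert_A$); once $A$ is coloured, say with colour classes $U_i$, the residual game on $E\setminus A$ is the generalized game for the matroids $M_i/U_i$, and since contraction by $U_i\subseteq A$ can only increase the $M_i/A$-ranks, the rank condition derived for $M_i/A$ (by subtracting the tight equality from the inequality for $A\cup B$) persists, and induction finishes. If no proper nonempty subset is tight, Alice indicates an arbitrary element, and after Bob's reply the rank sum can drop by at most one, so the strict inequalities survive.

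Your argument instead keeps Alice's strategy uniformly ``one element at a time'': she always indicates some $e$ lying in a \emph{minimal} nonempty tight set. You then use two structural facts the paper never invokes---that tight sets form a lattice, and that disjoint skew sets have closures meeting only in loops---to show that no legal colour $j$ can have $e\in\mathrm{cl}_{M_j}(B)$ for a tight $B\not\ni e$, which is exactly what is needed to preserve the rank condition after one move. The paper's approach is shorter and needs no closure/skewness reasoning, at the cost of a less explicit strategy (Alice commits to exhausting a whole tight subset before touching its complement). Your approach is more work but yields a cleaner per-move description and exposes the lattice structure of tight sets; both rely on Theorem~\ref{union} and induction in the same overall shape.
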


\begin{proof}
The proof goes by induction on the number of elements of $E$. For a set $E$ consisting of only one element assertion clearly holds. Thus we assume that $\lvert E\rvert>1$ and that the assertion is true for all smaller sets. 

Let us denote by $M\vert_{A}$ the restriction of a matroid $M$ to a set $A$, and by $M/A$ the contraction of a set $A$ in a matroid $M$. Denote also the rank functions of matroids $M_1,\dots,M_k$ respectively by $r_1,\dots,r_k$. From Theorem \ref{union} we know that for each $\emptyset\neq A\subset E$ holds an inequality $$r_1(A)+\dots+r_k(A)\geq\lvert A\rvert.$$ There are two cases:

Case 1: there is a proper subset $\emptyset\neq A\subsetneq E$ with equality $$r_1(A)+\dots+r_k(A)=\lvert A\rvert.$$ Then by subtracting equality for $A$ from the inequality for $A\cup B$ we get that for every subset $B\subset E\setminus A$ holds 
$$r_1(A\cup B)-r_1(A)+\dots+r_k(A\cup B)-r_k(A)\geq\lvert A\cup B\rvert-\lvert A\rvert=\lvert B\rvert.$$
The left side of this inequality is the sum of ranks of the set $B$ in matroids $M_1/A,\dots,M_k/A$. Thus by Theorem \ref{union} the collection of matroids $M_1/A,\dots,M_k/A$ on $E\setminus A$ satisfies assumptions of the theorem. The collection $M_1\vert_{A},\dots,M_k\vert_{A}$ of matroids on $A$ clearly also does. By inductive assumption Alice has a winning strategy in the game with matroids $M_1\vert_{A},\dots,M_k\vert_{A}$ on the set $A$, so she plays with this strategy. Let us denote elements colored by Bob with $i$ after the game is finished by $U_i$. Now the play moves to the set $E\setminus A$, so now the original game is on matroids $M_i/U_i$. But since the collection of matroids $M_i/A$ satisfies assumptions of the theorem, collection of matroids $M_i/U_i$ also does and Alice has a winning strategy by inductive assumption. As a result she wins the whole game on $E$. 

Case 2: for all $\emptyset\neq A\subsetneq E$ holds $$r_1(A)+\dots+r_k(A)>\lvert A\rvert.$$ Then in the first round of the game Alice indicates an arbitrary element $e\in E$. Obviously $e\in V_l$ for some $l$, thus Bob has an admissible move -- he can color it with $l$. Suppose Bob colors $e$ with $j$. Now the original game is on the matroids $M_i\vert_{E\setminus \{e\}}$ for $i\neq j$ and $M_j/\{e\}$ on the set $E\setminus \{e\}$. For them the second condition of Theorem \ref{union} holds since $r_j(A)$ can possibly be lower only by one. Hence Alice has a winning strategy by inductive assumption. 
\end{proof} 

%%%%%%%%%%%%%%%%%%%%%%%%%%%%%%%%%%%%%%%%%%%%%%%%%%%%%%%%%%%%%%%%%%%%%%%%%%%%%%%%%%%%%%%%%%%%%%%%%%%%%%%%%%%%%%%%%%%%%%%%%%%%%%%%%%%%%%%%%%%%%%%%%%%%%%%%%%%%%%%%%%%%%%%%%
\section{Modified indicated chromatic number}
%%%%%%%%%%%%%%%%%%%%%%%%%%%%%%%%%%%%%%%%%%%%%%%%%%%%%%%%%%%%%%%%%%%%%%%%%%%%%%%%%%%%%%%%%%%%%%%%%%%%%%%%%%%%%%%%%%%%%%%%%%%%%%%%%%%%%%%%%%%%%%%%%%%%%%%%%%%%%%%%%%%%%%%%%%

We consider a variant of the game from the previous section. Alice and Bob make alternative moves. If it is her turn, then as before, Alice indicates an uncolored yet element of $E$, and Bob colors it using a color from a fixed set $C$. While if it is Bob's turn their roles are swapped, now he indicates an uncolored yet element of $E$, and Alice colors it using a color from $C$. The other conditions of the game stay unchanged -- elements of the same color must form an independent set and Alice wins if the whole matroid has been colored. The minimum size of the set of colors $C$ for which Alice has a winning strategy we call a \emph{modified indicated chromatic number} of $M$, and denote by $\chi_{i}^{mod}(M)$. 

\begin{theorem}
Every loopless matroid $M$ satisfies $\chi_{i}^{mod}(M)=\chi(M)$.
\end{theorem}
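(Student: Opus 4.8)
The plan is to follow the proof of Theorem~\ref{general} as closely as possible. I would state and prove, by induction on $|E|$, the analogous assertion for the \emph{generalized modified game}: given matroids $M_1,\dots,M_k$ on $E$ for which there are independent sets $V_i$ of $M_i$ with $V_1\cup\dots\cup V_k=E$, Alice wins the modified indicated colouring game with colours $1,\dots,k$ (colour $i$ forced to stay independent in $M_i$), \emph{whichever of the two players is the first to indicate}. The main theorem then follows by taking $M_1=\dots=M_k=M$, $k=\chi(M)$, and any partition of $E$ into $k$ independent sets; the reverse inequality $\chi_i^{mod}(M)\ge\chi(M)$ is immediate, since a fully coloured position is a proper colouring. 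Throughout a play Alice maintains the invariant that the current matroids $M_i/U_i$ restricted to the still-uncoloured elements (where $U_i$ is the set already coloured $i$) satisfy condition~(2) of Theorem~\ref{union}; by Theorem~\ref{union} this is the same as saying they still admit a common cover by independent sets.

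Rounds in which \textbf{Bob} indicates are the easy ones. If Bob indicates $e$, Alice fixes a cover $V_1\cup\dots\cup V_k$ of the current uncoloured set by sets $V_i$ independent in the current $M_i$, chooses an index $m$ with $e\in V_m$, and colours $e$ with $m$; this is legal, and the sets $V_i\setminus\{e\}$ (for all $i$) still form a common cover of the remaining ground set by independent sets of the new matroids, so the invariant survives and Alice applies the inductive hypothesis to the resulting position, where it is her turn. This step uses no assumption on the matroids, so the whole difficulty lies in rounds where Alice indicates.

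For a round in which \textbf{Alice} indicates (and Bob colours) I would split into the two cases of Theorem~\ref{general}. If no nonempty proper subset of $E$ is tight --- i.e.\ the union inequality is strict off $\emptyset$ --- Alice indicates an arbitrary element, and exactly the Case~2 computation of Theorem~\ref{general} shows the invariant survives: contracting the indicated element in whichever matroid $M_j$ Bob picks lowers the relevant rank-sum by at most one, and the strict slack absorbs it. The real obstacle is the analogue of Case~1: there Theorem~\ref{general} let Alice colour the whole tight set $A$ before touching the rest, whereas now Bob controls half the indications, so she cannot force this. Instead I would let $A$ be a tight nonempty set of \emph{smallest cardinality} and have Alice indicate some $e\in A$. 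The key point is that after Bob colours $e$ with $j$, the union inequality on $E\setminus\{e\}$ can fail only on tight sets $X\subseteq E\setminus\{e\}$ and only when $r_j(X\cup\{e\})=r_j(X)$; but the family of tight sets is closed under intersection (submodularity of $r_1+\dots+r_k$ together with the union inequality forces equality throughout when two tight sets meet), so $X\cap A$ is tight, and minimality of $A$ together with $e\in A\setminus X$ forces $X\cap A=\emptyset$. The same submodularity argument applied to $X$, $A$, $X\cup A$ shows that disjoint tight sets are skew in every $M_i$, namely $r_i(X\cup A)=r_i(X)+r_i(A)$; hence, since $e$ is a non-loop of $M_j$ (so $r_j(\{e\})=1$), submodularity of $r_j$ gives $r_j(X\cup\{e\})+r_j(A)\ge r_j(X\cup A)+r_j(\{e\})=r_j(X)+r_j(A)+1$, i.e.\ $r_j(X\cup\{e\})>r_j(X)$, so the bad case cannot occur. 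Thus the invariant survives Bob's colouring and Alice invokes the inductive hypothesis on $E\setminus\{e\}$, now with Bob to indicate.

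The remaining points are routine and parallel to Theorem~\ref{general}: the base case $|E|\le 1$ (the single element is a non-loop in some $M_i$, hence colourable by either colourer), the existence of a minimal tight nonempty set (take one of minimum cardinality), and the contraction identity $r_{M_j/\{e\}}(X)=r_j(X\cup\{e\})-r_j(\{e\})$ with $r_j(\{e\})=1$ because $e$ was legally coloured $j$. I expect the only genuinely new ingredients to be the observation that a failure of the union inequality after Bob's move occurs only on tight $X$ with $r_j(X\cup\{e\})=r_j(X)$, and the skewness of disjoint tight sets; assembling these with the bookkeeping above yields the generalized statement, and hence $\chi_i^{mod}(M)=\chi(M)$.
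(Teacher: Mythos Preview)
Your argument is correct, but it takes a different route from the paper's.  The paper proves the generalized modified statement (its Theorem~\ref{general2}) by invoking Theorem~\ref{general} as a black box: when it is Alice's turn to indicate, she simply plays the first move of her winning strategy in the \emph{pure} generalized indicated game; since that strategy is winning, after Bob's reply she still has a winning strategy in the pure game on the updated matroids $M'_i$, hence a full proper colouring of $E\setminus\{e\}$ exists, hence the covering hypothesis holds for the $M'_i$, and the induction goes through.  No separate analysis of tight sets is needed at this stage---all of that work was already done inside Theorem~\ref{general}.

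Your approach instead re-establishes the invariant directly for Alice-indicates rounds, and in doing so you replace the ``colour all of $A$ first'' manoeuvre of Case~1 of Theorem~\ref{general} (which is unavailable once Bob controls some indications) by the minimal-tight-set argument: tight sets are closed under intersection, a tight $X\subseteq E\setminus\{e\}$ must therefore be disjoint from the minimal tight $A\ni e$, disjoint tight sets are skew in every $M_i$, and submodularity then forces $r_j(X\cup\{e\})>r_j(X)$.  This is a genuinely new ingredient; it is correct as you state it (the equality chain in the submodular inequality for $X$ and $A$ does force termwise modularity, giving skewness in each $M_i$ individually).  The payoff is a self-contained proof that does not need Theorem~\ref{general} at all---indeed it subsumes it---and an explicit strategy for Alice (``indicate any element of a minimal tight set'') rather than the implicit ``do whatever Theorem~\ref{general} says''.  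The paper's payoff is brevity and modularity: once Theorem~\ref{general} is in hand, Theorem~\ref{general2} follows in a few lines.  Note also that your invariant-preservation argument works round by round regardless of who indicated previously, so it in fact proves the stronger ``Bob chooses the type of each round'' version that the paper states, not just the alternating one.
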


As before in order to prove the above theorem we introduce a more general game. The theorem is a simple corollary of Theorem \ref{general2}. 

Let $M_1,\dots,M_k$ be matroids on the same ground set $E$. We consider a game in which in each turn Bob decides which of the following two kinds of moves is played:
\begin{enumerate}
\item Alice indicates an uncolored yet element of $E$ and Bob colors it using a color from $\{1,\dots,k\}$,
\item Bob indicates an uncolored yet element of $E$ and Alice colors it using a color from $\{1,\dots,k\}$. 
\end{enumerate}
Both players have to obey the rule that for each $i$ elements colored with $i$ form an independent set in the matroid $M_i$. If in some turn Alice or Bob does not have an admissible move the game ends. Alice wins if the whole matroid has been colored, while Bob wins if a partial coloring can not be properly extended.

\begin{theorem}\label{general2}
Let $M_1,\dots,M_k$ be matroids on the same ground set $E$. Suppose there are sets $V_1,\dots,V_k$, such that $V_i$ is independent in $M_i$, and $V_1\cup\dots\cup V_k=E$. Then Alice has a winning strategy in the generalized modified indicated coloring game.
\end{theorem}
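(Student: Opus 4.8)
The plan is to mimic the proof of Theorem~\ref{general}: induct on $|E|$ and split according to whether some proper nonempty subset of $E$ is tight for the matroid union inequality. Note first that Theorem~\ref{general2} really does contain Theorem~\ref{general}, since Bob is free always to choose the first kind of move. The only genuinely new difficulty is that in the modified game Alice no longer controls every indication, so the trick from the proof of Theorem~\ref{general} of forcing the subgame on a tight set $A$ to be played first is unavailable; I would replace it by letting Alice run two subgames in parallel.

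For $|E|=1$ the assertion is immediate, so assume $|E|>1$. By Theorem~\ref{union} we have $r_1(A)+\dots+r_k(A)\ge|A|$ for every $A\subseteq E$, where $r_i$ is the rank function of $M_i$. Suppose first that this inequality is strict for every $\emptyset\neq A\subsetneq E$. Then I handle the first round for both kinds of move exactly as in Theorem~\ref{general}: if Alice indicates, she indicates an arbitrary element $e$, and Bob can colour it legally because $e$ lies in some $V_l$; if Bob indicates an element $e$, Alice colours it with an index $l$ such that $e\in V_l$. In either case one element $e$ has been coloured, say with index $j$; the game continues on $E\setminus\{e\}$ with $e$ contracted in $M_j$ and deleted from the remaining $M_i$, and since $e$ is not a loop of $M_j$ the sum of ranks of any $B\subseteq E\setminus\{e\}$ drops by at most one, so it is still $\ge|B|$. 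By Theorem~\ref{union} the new collection satisfies the hypothesis, so the inductive assumption finishes the game.

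Now suppose $r_1(A)+\dots+r_k(A)=|A|$ for some $\emptyset\neq A\subsetneq E$. As in Theorem~\ref{general}, subtracting this equality shows that $M_1/A,\dots,M_k/A$ on $E\setminus A$ satisfy the hypothesis, while $M_1|_A,\dots,M_k|_A$ on $A$ trivially do; let $S$ and $S'$ be winning strategies for Alice in the two corresponding modified games, supplied by induction. Alice's strategy on $E$ is a bookkeeping one: she maintains a virtual play of each subgame so that at every moment the coloured part of $A$ (respectively of $E\setminus A$) in the real game coincides with the coloured set of the $A$-subgame (respectively of the $(E\setminus A)$-subgame). Concretely, when it is her turn to indicate she indicates inside $A$ following $S$ as long as $A$ is not fully coloured, and inside $E\setminus A$ following $S'$ afterwards; when Bob indicates an element she colours it using $S$ or $S'$ according to which side it lies on; when Bob colours an element she records the colour in the appropriate subgame.

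Checking legality and that this wins is where the real work lies, and I expect it to be the main obstacle: the two zones $A$ and $E\setminus A$ interact through the shared colour classes. The key lemma is a gluing statement, proved by a short rank computation: if $D\subseteq A$ is independent in $M_i$ and $C\subseteq E\setminus A$ is independent in $M_i/A$, then $D\cup C$ is independent in $M_i$ (extend $D$ to a basis $B$ of $M_i|_A$; since $B$ spans $A$, the matroids $M_i/B$ and $M_i/A$ agree on $E\setminus A$, so $r_{M_i}(B\cup C)=|B|+|C|$). Two observations then make the simulation coherent. First, while $A$ is not completely coloured Alice indicates only inside $A$, so Bob never colours an element of $E\setminus A$ during that phase; hence the coloured part of $E\setminus A$ is exactly what Alice produced via $S'$ and is in particular independent in every $M_i/A$, and combined with the gluing lemma this shows that every colour Alice assigns (via $S$ inside $A$, via $S'$ inside $E\setminus A$) is legal in the real game, while Bob's colourings of elements of $A$ transfer back to the $A$-subgame directly. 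Second, tightness of $A$ forces the colour classes of any complete colouring of $A$ to be bases of the respective $M_i|_A$, so once $A$ has been coloured the admissible colours for the remaining elements of $E\setminus A$ are exactly those of the $M_i/A$ game; together with the first observation this also guarantees that Bob's colourings of elements of $E\setminus A$ are faithfully recorded, and that both virtual plays can be carried to completion. Since $S$ and $S'$ are winning, $A$ and $E\setminus A$ both end up fully coloured, hence so does $E$, and Alice wins.
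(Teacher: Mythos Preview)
Your argument is correct, including the parallel-subgame bookkeeping in the tight case: the gluing lemma and the observation that Bob never colours in $E\setminus A$ before $A$ is finished are exactly what is needed to keep both simulations legal, and tightness of $A$ is what makes Bob's colourings in $E\setminus A$ translate faithfully to the $M_i/A$ game once $A$ is complete.

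The paper, however, takes a markedly shorter route and avoids the tight/non-tight dichotomy altogether. Its induction step distinguishes only on which kind of move Bob selects first. If Bob indicates, Alice colours using the $V_j$'s and the hypothesis is visibly preserved. If Alice is to indicate, she borrows her move from the winning strategy for the \emph{non-modified} game supplied by Theorem~\ref{general}; after Bob's reply, that non-modified strategy is still winning on the resulting matroids $M'_1,\dots,M'_k$, hence in particular a proper colouring of $E\setminus\{e\}$ by independent sets of the $M'_i$ exists, and this is precisely the hypothesis of Theorem~\ref{general2} on the smaller ground set. Thus the existence of a winning strategy in Theorem~\ref{general} is used not to play out a subgame, but merely as a certificate that the covering-by-independent-sets condition is preserved after one round. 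What the paper's approach buys is brevity and a clean reduction to Theorem~\ref{general}; what your approach buys is self-containment, since your tight-case argument would also reprove Theorem~\ref{general} (take all moves of type~1) without invoking it as a black box.
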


\begin{proof}
The proof goes by induction on the number of elements of $E$. For a set $E$ consisting of only one element assertion clearly holds. Thus we assume that $\lvert E\rvert>1$ and that the assertion is true for all smaller sets. 

If Bob decided that in the first turn second kind of move is played, then he points say $e\in E$. From the assumption $e\in V_j$ for some $j$. Alice strategy is to color $e$ with $j$. Now the remaining part is played on matroids $M_i\vert_{E\setminus \{e\}}$ for $i\neq j$ and $M_j/\{e\}$, so assumptions of the theorem are clearly satisfied and from inductive assumption Alice has a winning strategy.

If Bob decided that in the first turn first kind of move is played, then consider generalized indicated coloring game on matroids $M_1,\dots,M_k$. By Theorem \ref{general} Alice has a winning strategy in this game. Suppose she indicates an element $e\in V_l$, and suppose Bob colors $e$ with $j$. Now the generalized indicated coloring game is on the matroids $M'_i=M_i\vert_{E\setminus \{e\}}$ for $i\neq j$ and $M'_j=M_j/\{e\}$. Moreover, Alice still has a winning strategy, thus there exists a proper coloring in which elements of color $i$ form an independent set in $M'_i$. For each $i$ let $U_i$ be the set of elements colored with $i$, we have $U_1\cup\dots\cup U_k=E\setminus\{e\}$. Since the remaining part of generalized modified indicated coloring game is played also on matroids $M'_i$, this gives that the assumptions of the theorem are satisfied. Thus Alice has a winning strategy by inductive assumption. 
\end{proof}

\end{document}